\newenvironment{Ali}{\color{blue}}{\color{black}}
\newcommand{\AAA}{\begin{Ali}}
\newcommand{\PPP}{\end{Ali}}
\newcommand{\RRR}{\begin{Ali}}
\newcommand{\SSS}{\end{Ali}}
\newcommand{\Rey}{\mathrm{Re}_{\nu}}
\newcommand{\rey}{\mathrm{Re}_{\bar{\nu}}}
\newcommand{\E}{\mathbb{E}}
\newcommand{\grad}{\nabla}
\newtheorem{theorem}{Theorem}[section]
\newtheorem{lemma}[theorem]{Lemma}
\theoremstyle{definition}
\newtheorem{definition}[theorem]{Definition}
\newtheorem{remark}[theorem]{Remark}
\newtheorem{AS}[theorem]{Assumption}
\numberwithin{equation}{section}
\newcommand{\RN}[1]{%
  \textup{\uppercase\expandafter{\romannumeral#1}}%
}
\date{\today}
\subjclass[2010]{Primary 35Q30, 60H15; Secondary 76F65, 35R60}
\keywords{Stochastic PDE,  Turbulence Statistics, Ladyzhenskaya-Smagorinsky equations}
\begin{document}

\title[Statistics of the 3D Stochastic Turbulence Model]{Time-averaged statistics of the  3D stochastic Ladyzhenskaya-Smagorinsky equations}

\author[L. W.-T. Fan]{Louis Wai-Tong Fan}

\address[L. W.-T. Fan]{School of Data Science and Society,
University of North Carolina at Chapel Hill,
NC, USA}
\email{\tt louisfan@unc.edu}

\address[L. W.-T. Fan]{Department of Organismic and Evolutionary Biology,
Harvard University,
MA, USA}
\email{\tt wlfan@fas.harvard.edu}

\author[A. Pakzad]{Ali Pakzad}

\address[A. Pakzad]{Department of Mathematics, California State University Northridge,  CA, USA}
\email{{\tt pakzad@csun.edu}}

\maketitle
\setcounter{tocdepth}{1}

\begin{abstract}
 
Due to the chaotic nature of turbulence, statistical quantities are often more informative than pointwise characterizations. In this work, we consider the stochastic Ladyzhenskaya–Smagorinsky equation driven by space-time Gaussian noise in a three-dimensional periodic domain.  We derive a rigorous upper bound on the first moment of the energy dissipation rate and show that it remains finite in the vanishing viscosity limit, consistent with Kolmogorov's phenomenological theory. This estimate also agrees with classical results obtained for the Navier–Stokes equations and demonstrates that, in the absence of boundary layers, as considered here, the model does not over-dissipate.
\end{abstract}

\section{Introduction}

In the late 1960s, Ladyzhenskaya \cite{L67, L69, L98, L68} and Smagorinsky \cite{SM63} proposed several modifications to the Navier–Stokes equations, with the  viscosity depending on the gradient of the velocity field. These modified equations form a significant mathematical model for capturing the flow behavior of various non-Newtonian fluids and are also widely used in simulating the large eddies of turbulent flows \cite[Chapter~8]{Wilcox06}. Herein, we consider the stochastically-forced Ladyzhenskaya–Smagorinsky equations
\begin{equation}\label{SSM}
\begin{split}
 d u +  \left( u \cdot \nabla u -  \nu \Delta u - \nabla \cdot \left( \bar{\nu} \,  |\nabla u|^{r-2} \nabla u \right) + \nabla  p \right) \, dt & =    f \,  dt  +   g(t,u) \, d W,\\
 \nabla \cdot u & = 0,
\end{split}
\end{equation}
in a  periodic box of side length $\ell$, i.e. with $ x= (x_1, x_2, x_3) \in \mathbb{T}^3$, the $3-$dimensional torus.   In the equation above,  the stochastic process $u$ and $p$ are the velocity field and the pressure respectively,  $\nu$ is the  kinematic viscosity,  $\bar{\nu} >0$ is the model parameter and Frobenius norm   $|\nabla u|^2 = \nabla u : \nabla u$.   The two terms on the right-hand side of \eqref{SSM} are, respectively, a deterministic  time-independent applied force $f$   and a stochastic forcing $g$ driven by a space-time Gaussian noise $W$. The multiplicative noise term \( g(t, u)\, dW \) has a continuous-in-time effect on the system, making it well-suited for stochastic models of fluid dynamics, where it can represent random interactions occurring over time. Recently, Nguyen, Tawri and Temam \cite{NTT21} established the existence and uniqueness of both martingale and pathwise solutions for a class of stochastic Ladyzhenskaya–Smagorinsky equations driven by Gaussian noise and a Poisson random measure. 
Their analysis, based on the Galerkin approximation method, can be straightforwardly adapted to yield the existence of a martingale solution for the stochastic equation considered in this paper.

A salient characteristic of turbulence is its intermittency in both space and time, which makes it impractical to provide a detailed pointwise description of the fluid flow; see numerous experimental examples in \cite{baumert2005}. However, averaged quantities often exhibit predictable behavior, even when the flow dynamics are highly irregular over finite time intervals. As a result, theoretical studies of turbulence typically adopt a statistical rather than deterministic, pointwise approach, with turbulence most commonly quantified through the turbulent energy dissipation rate \cite{F95}. 
In this article, we derive rigorous upper bounds on the bulk energy dissipation rate \(\varepsilon\) for incompressible fluid flows governed by \eqref{SSM}, and quantify the effect of the noise on this key measure of turbulent flow.

\subsection{Turbulent Energy Cascade}

A fundamental principle of modern hydrodynamic turbulence theory is that nonlinear interactions between components of the velocity field transfer energy from the directly forced large spatial scales, through the so-called inertial range, down to small dissipative scales where viscosity effectively consumes kinetic energy and converts it into heat. This multiscale transfer of energy, from large scales down to small dissipative scales, is known as the energy cascade, a concept originally proposed by Richardson in 1992.  He illustrated the idea of energy transfer in his book  \cite{richardson1922} as:  \say{\textit{Big whirls have little whirls that feed on their velocity; 
And little whirls have lesser whirls, and so on to viscosity}.}

A fundamental and surprising consequence of the energy cascade mechanism is the so-called \emph{anomalous dissipation}, which refers to the persistence of a finite, non-zero energy dissipation rate even as the viscosity \(\nu\) approaches zero (equivalently, in the limit of infinite Reynolds number). Intuitively, one might expect that vanishing viscosity would lead to zero dissipation; however, due to the formation of increasingly fine-scale velocity fluctuations in turbulent flows, energy continues to be dissipated at small scales. This phenomenon is quantitatively captured by Kolmogorov’s scaling law  \cite{K41} for the energy dissipation rate
\[
\varepsilon \sim C_{\epsilon} \frac{U^3}{L} \quad \text{as } \quad \nu \to 0,
\]
where \(\varepsilon\) denotes the total energy dissipation rate per unit mass, \(U\) is  a characteristic velocity scale, \(L\) is an integral length scale in the flow characterizing the domain or a large scale in the forcing and flow, and \(C_{\epsilon}\) is a dimensionless constant. This theory yields many physical predictions about averaged quantities in turbulent flows that are remarkably accurate, yet remain mathematically unproven. Nevertheless, significant efforts have been devoted to rigorously \textit{estimating} this key physical quantity using mathematical analysis.  

The idea of extracting information about driven turbulent flows by deriving bounds on physically relevant quantities, using mathematically rigorous methods and without relying on ad hoc assumptions, originates in the work of Howard \cite{H72} and Busse \cite{B70}. Building on these foundations, Doering and Constantin \cite{DC92}, and later Doering and Foias \cite{DF02}, developed a practical framework for rigorously estimating the energy dissipation rate directly from the equations of motion. They showed that the dissipation rate  satisfies an upper bound of the form
\[
\varepsilon \lesssim  \frac{U^3}{L} \ \quad \text{as } \quad \nu \to 0.
\]
These foundational results have inspired numerous developments in various directions, see for example \cite{W00, PS2024, AP17, L16, K16, DLPRSZ18, AP19, BJMT14, FGHR08, W97, kumar2025}.  Sreenivasan has compiled extensive experimental \cite{sreenivasan1984} and numerical \cite{sreenivasan1998} evidence supporting Kolmogorov scaling. 

\subsection{Randomness in Turbulence}
Stochastic fluid dynamics emerged as a natural extension of classical deterministic models to account for the influence of unresolved small-scale fluctuations, environmental variability, and modeling  both numerical and empirical uncertainties in fluid flows (see Chapter 3 of \cite{P00}).  In addition, there are many examples which support the stabilization of PDEs by noise (see, e.g. \cite{B01},  \cite{CLM01},  \cite{FSQD19}, \cite{K99}).    Early developments in the 1960s and 70s were driven by efforts to incorporate random perturbations into the Navier–Stokes equations to better capture observed turbulence features and to provide more realistic representations of atmospheric and oceanic flows.  Seminal works by Bensoussan and Temam \cite{BT73} laid foundational theory for stochastic partial differential equations (SPDEs) in fluid mechanics, introducing rigorous formulations of noise-driven fluid systems. Since then, a rich body of literature has developed around the stochastic Navier–Stokes equations, and other related models, with significant advances in existence theory, invariant measures, ergodicity, and long-time statistical properties (see for example  \cite{BCPW19},  \cite{B00},  \cite{BP00}, \cite{CI08}, \cite{CI11}, \cite{DGTZ12}, \cite{GKVZ14}, \cite{KUZ18},\cite{MR04}, \cite{MR05}, and  \cite{WW15} and the references therein). Building on this framework, the stochastic Ladyzhenskaya equations \eqref{SSM} arose as an important generalization to model non-Newtonian fluids and turbulence in large-eddy simulations, incorporating nonlinear viscosity terms inspired by Ladyzhenskaya’s and Smagorinsky’s modifications. The model corresponding to  $r=2$  reduces to the stochastic Navier-Stokes equations,  and   for $r=3$,   it is mathematically equivalent to  the Stochastic Smagorinsky model \cite{SM63} and the stochastic  NSE with the von Neumann Richtmyer artificial viscosity for shocks \cite{VR50}. Recent studies have extended well-posedness results to this setting \cite{cyr2020review, NTT21}, laying the groundwork for deeper investigations into the statistical properties  of stochastic turbulence under nonlinear viscosity models. Much of the existing literature addresses both pathwise and martingale solutions. However, in this manuscript, we focus exclusively on the martingale solutions of \eqref{SSM}, which are probabilistically weak analogues of the Leray-Hopf weak solutions in the deterministic setting.

In the context of stochastic turbulence, Kolmogorov’s \(4/3\)- and \(4/5\)-laws, as well as the third-order universal scaling law, have been rigorously studied in \cite{BCPW19} by Bedrossian et al.\ and in \cite{Dudley2024} by Dudley, respectively.  In the context of shear flow, the authors in \cite{FJP21} and \cite{FPTT23} derived upper bounds on the first and second moments of the energy dissipation rate in shear-driven turbulence, where the flow is driven only by random boundary motion. Biferale et al.\ \cite{biferale2004, biferale2004anomalous} performed numerical simulations of the three-dimensional Navier–Stokes equations driven by a stochastic body force that is white in time. Alexakis and Doering in \cite{alexakisdoering2006} developed a systematic approach to estimate energy and enstrophy dissipation in 2D forced flows, and this work has been recently extended to two-dimensional flows driven by various stochastic forces in \cite{kumar2025}.


\subsection{Summary of the results}

In this manuscript, considering turbulence driven by the stochastic model   \eqref{SSM}, we rigorously prove (see Theorem \ref{MainThm1}) that the first moment of the energy dissipation rate $\varepsilon$ is estimated as 

\[
\varepsilon \lesssim  \left(1 + \frac{\rho_{\infty}\, L}{U}+ \frac{\nu}{U\, L} +  \frac{\bar{\nu}\, U^{r-3}}{L^{r-1}} \right) \frac{U^3}{L}.
\]
The above expression is dimensionally consistent, and \( \rho_{\infty} \in[0,\infty) \) is a constant that quantifies the contribution of the stochastic forcing; see Assumption \ref{Assumption}. Note that the above estimate 

\begin{itemize}
    \item indicates that the energy dissipation rate  \( \varepsilon \) balances  the energy input rate \( U^3/L \), in agreement with classical turbulence phenomenology (e.g., \cite{F95}), 
    \item  remains finite in the high Reynolds number limit reflects the so-called \emph{dissipative anomaly} and is consistent with Kolmogorov's theory of turbulence \cite{K41}, 
    \item is consistent with the rate observed for the the Navier–Stokes equations (when \( g \sim 0 \) and \( \bar{\nu} \sim 0 \)) in \cite{DF02},
     \item is consistent with the rate observed for the Smagorinsky model of turbulence (when \( g \sim 0 \) and $r=3$) in \cite{L16},
    \item establishes that, in the absence of  boundary layers, the  model does not over dissipate.
\end{itemize}

\subsection*{Organization of this paper}
In section \ref{Section2},  we introduce the precise assumptions on both the deterministic and stochastic forces, and also review the necessary setup and results required for this paper. In Section \ref{Section3}, we state the main results.


 \section{Mathematical framework and definitions}\label{Section2}

 Let $D$ denote the   periodic box in $3d$ with side length $\ell$, i.e. with $ x= (x_1, x_2, x_3) \in \mathbb{T}^3$, the $3-$dimensional torus of volume $ |D|= \ell^3$ (The analysis holds for both $2$ and $3$ dimension, but we will focus on the applications in  $ 3d$).  Constant $C$ represents a generic positive constant independent of $\nu, \Rey, $ and other model parameters.


Following \cite{BT73} and \cite{RocknerZhuZhu2015}, we shall restrict ourselves to flows which have zero average over the domain, and with an eye on the   notations  in \cite[Example 8.1]{NTT21},  we define  
\begin{equation}\label{Def:VH}
\begin{split}
H: &=\{\varphi\in \left(L^2(D)\right)^3:\,\nabla\cdot \varphi=0,  \, \,  \int_D \varphi=0\}, \\
V: & =\{ \varphi\in \left(H^1(D)\right)^3:\,\nabla\cdot \varphi=0, \, \, \int_D \varphi=0\},
\end{split}
\end{equation}
which are  the completions of divergence-free smooth functions with zero average with respect to $L^2$ and $H^1$-norms.  We also let 
\begin{equation}\label{Def:Vr}
\begin{split}
V_r: & =\{ \varphi\in \left(W^{1,r}(D)\right)^3:\,\nabla\cdot \varphi=0,  \, \, \int_D \varphi=0\}.
\end{split}
\end{equation}

 Let $p \in [1 , \infty]$, and the
Lebesgue space $L^p(\Omega)$ is the space of all measurable functions $v$ on $\Omega$ for which
\begin{align*}
\|v\|_{L^p} & :=\big( \int_{\Omega} |v (x)| ^p\, dx\big)^{\frac{1}{p}} \, < \infty,  \hspace{1cm} \text{if} \hspace{0.2cm} p \in [1 , \infty),\\
\|v\|_{L^{\infty}}& := \sup_{x \in \Omega}  |v (x)| < \infty,  \hspace{2.3cm} \text{if} \hspace{0.2cm} p  = \infty.
    \end{align*}
The $L^2$  norm and inner product will be denoted by $\|\cdot\|$ and $( \cdot ,  \cdot)$ respectively, while all other norms will be labeled with subscripts. Let $B$ be a Banach space 
with norm $\| \cdot \|_B$. We denote by $L^p(a, b; B)$,  $p \in [1 , \infty]$,  the space of functions $v : (a, b) \rightarrow  B$ such that
\begin{align*}
\|v\|_{L^p (a, b ; B)} &:=\big( \int_a^b \|v(t)\|_B ^p\, dt\big)^{\frac{1}{p}} \, < \infty,  \hspace{1cm} \text{if} \hspace{0.2cm} p \in [1 , \infty),\\
\|v\|_{L^{\infty} (a, b ; B)}& := \sup_{t  \in (a , b)} \|v( t )\|_B < \infty,  \hspace{1.87cm} \text{if} \hspace{0.2cm} p  = \infty.
    \end{align*}

Denote by \( p' \) the conjugate exponent, where \( \frac{1}{p} + \frac{1}{p'} = 1 \). Assume \( \phi \in L^p \) and \( \psi \in L^{p'} \) with \( 1 \leq p \leq \infty \). Then, the following holds
\begin{equation}\label{Holder}
 \tag{H\"older's inequality}
\|\phi \, \psi\|_{L^1} \leq \|\phi\|_{L^p} \, \|\psi\|_{L^{p'}}.
\end{equation}
Additionally, for any \( a, b \geq 0 \) and \( \lambda > 0 \), we have
\begin{equation}\label{Young}
 \tag{Young's inequality}
a b \leq \lambda a^p +  \left( p \lambda \right)^{-\frac{p'}{p}} \frac{1}{p'} b^{p'}.
\end{equation}
Writing $\lambda_1$ as the smallest eigenvalue of  the Stokes operator (see \cite{FMRT01}),  then for $\phi \in H^1$,  with the zero average assumption,  we have 
\begin{equation}\label{Poincare}
\tag{Poincar\'e inequality}
\lambda_1 \|\phi\|^2 \leq  \|\nabla \phi\|^2.
\end{equation}

\subsection{Martingale solution} 

The concept of martingale solutions for stochastic Navier-Stokes equations, as described by Flandoli and G\polhk atarek in \cite{FG95}, is analogous to those for stochastic ordinary differential equations. In this notion of solution, the underlying filtered probability space is part of the solution, rather than an input of the system. A bit more precisely, the \textit{pair}--consisting of the filtered probability space and the process defined on it that satisfies the equation--is a martingale solution. 

We will precise such notion for \eqref{SSM} in Definition \ref{MgaleCmpct}, after introducing some notations below.  Consider the  Gaussian noise $W$ which  is assumed to be a mean zero, white-in-time and colored-in-space Gaussian process that is a $S$-valued $Q$-Wiener process (c.f. \cite{DZ14}). Here,  $S$ is a separable Hilbert space and  $Q$ is a trace class nonnegative operator on $S$. More explicitly, we let $S_0:= Q^{1/2}(S)$, and $\{e_k\}$ be a complete orthonormal system in $S_0$. Then
(c.f. Proposition 4.3 of \cite{DZ14}) $W$ can be represented by $$W(t,x)=\sum_{k=1}^{\infty}\sigma_k\,e_k(x)\,B_k(t),$$
where  $\{\sigma_k^2\}$ is a  bounded sequence of nonnegative numbers that are the corresponding eigenvalues (i.e. $Qe_k=\sigma_k^2 e_k$), and  $\{B_k\}$ are independent one-dimensional Brownian motions. We assume the  coloring condition
\begin{equation}\label{E:EW}
  E_{W}:=\frac{1}{2}  \sum_{k=1}^{\infty}\sigma_k^2<\infty.
\end{equation}
We assume that $W$ is defined on a filtered probability space $\big(\Omega,\,\mathcal{F},\, (\mathcal{F}_t)_{t\in[0,\infty)}, \,\mathbb{P}\big)$ that satisfies the usual conditions.

We  let  $L_2(S_0,H)$ be the space of Hilbert-Schmidt operators from $S_0$ to $H$ equipped with Hilbert-Schmidt norm 
$$\|\Phi\|^2_{L_2(S_0,H)}:= \sum_{k=0}^{\infty}|\Phi e_k|^2, \hspace{1cm} \forall\,  \Phi\in L_2(S_0,H).$$ 

Martingale solutions were first constructed in \cite{BT73} for stochastically forced Navier–Stokes equations  and later on  in \cite{NTT21} for   the stochastically-forced Ladyzhenskaya–Smagorinsky equations  \eqref{SSM}. 
\begin{definition}
\label{MgaleCmpct}
A  martingale solution to \eqref{SSM}
on $[0, T ]$ consists of a stochastic basis $\big(\Omega,\, (\mathcal{F}_t)_{t\in[0,T]}, \,\mathbb{P}, W\big)$ 
with a complete right-continuous filtration $(\mathcal{F}_t)_{t\in[0,T]}$, 
and an $\mathcal{F}_t$-progressively measurable stochastic process
\[
u:\; [0,T] \times \Omega  \rightarrow  H
\]
such that 
\begin{itemize}
\item $u\in L^2(\Omega, L^{\infty}((0,T);H))\cap L^2([0,T]\times \Omega;V)
\cap L^r([0,T]\times \Omega;\,V_r )$
\item $u(0)\in H$ is $\mathcal{F}_0$-measurable 
\item for all $t\in [0,T]$, the following equation holds $\mathbb{P}$-almost surely, 
\begin{equation}\label{SSM2}
\begin{split}
 & u(t)-u(0) + \int_0^t \Big( u \cdot \nabla u -  \nu \Delta u - \nabla \cdot ( \bar{\nu} \,  |\nabla u|^{r-2} \nabla u ) + \nabla  p \,-f\Big)\, ds \\
  & = \int_0^t  g(s,u(s)) \, d W (s).
\end{split}
\end{equation}
\end{itemize}
\end{definition}

\smallskip

The following existence theorem follows directly from  \cite[Theorem~3.3 and Example~8.1]{NTT21}.


\begin{theorem}

Suppose $u_0\in H$ and the functions $f:D \to \mathbb{R}_+$ and $g: \mathbb{R}_+\times H\to L_2(S_0,H)$ satisfy that
\begin{itemize}
\item $u(0)$  is  $\mathcal{F}_0$-measurable,
    \item $f\in V$, $\grad f \in L^{\infty}(D)\cap L^r(D)$, 
    \item for any $T\in(0,\infty)$, there exists a constants $\rho_{T},\,\widetilde{\rho}_{g,T}\in(0,\infty)$ such that
\begin{equation}\label{Cond1:g}
\sup_{t\in [0,T]}\|g(t,v)\|^2_{L^2(S_0,H)} \leq \rho_{T}\, (1+|v|_H^2) \quad \text{for all }v\in H, 
\end{equation}
and that
\begin{equation}\label{Cond2:g}
\sup_{t\in [0,T]}\|g(t,v)-g(t,w)\|^2_{L^2(S_0,H)} \leq \widetilde{\rho}_{g,T}\,|v-w|_H^2 \quad \text{for all }v,w\in H, 
\end{equation}
\end{itemize}
Then for any $r\in (2,\infty)$ and $T\in (0,\infty)$, there exists  a martingale solution to \eqref{SSM} on $[0,T]$ in the sense of Definition \ref{MgaleCmpct}. Furthermore, there exists a constant $C_T\in(0,\infty)$  such that 
\begin{equation}\label{EnergyBounds}
    \E\left[ \sup_{t \in [0,T]} \|u(t)\|^2\right] + \nu \, \E\left[ \int_0^T \|\nabla u\|^2\, dt\right] + \bar{\nu} \, \E\left[ \int_0^T \|\nabla u\|_{L^r}^r\, dt\right] \leq C_T \,\left(1+\E\left[\|u_0\|^2\right]\right). 
\end{equation}
\end{theorem}

Suppose that the pair $\left(\big(\Omega,\, (\mathcal{F}_t)_{t\in[0,T]}, \,\mathbb{P}, W\big),\;u\right)$ is a martingale solution.
In particular, the stochastic integral 
$$\int_0^t g(s,u(s))\,dW(s)=\sum_{k=1}^{\infty}\sigma_k\int_0^t g(s,u(s))\,e_k\,dB_k(s)$$ is a well-defined $(\mathcal{F}_t)_{t\in[0,\infty)}$-martingale with mean zero and variance
\begin{align}
&\mathbb{E}\left[\left|\int_0^t g(s,u(s))\,dW(s)\right|_{H}^2\right]
=\left(\sum_{k=1}^{\infty}\sigma_k^2\right)\mathbb{E}\int_0^t \|g(s,u(s))\|^2\,ds \notag\\
\leq & \left(\sum_{k=1}^{\infty}\sigma_k^2\right) \rho_{T}\,\mathbb{E}\int_0^t (1+|u(s)|_{H}^2)\,ds
<\infty,  \label{Var_g}
\end{align}
for $t\in[0,T]$, where the inequality in \eqref{Var_g} follows from  the growth condition \eqref{Cond1:g}.
Suppose we write
\[
g(t,u(t))\, dW(t) = \sum_{k=1}^{\infty} g_k(t,u(t))\,dB_k(t),
\]
where, as in \cite{MS02}, $g_k(t,u(t)):=\sigma_k\,g(t,u(t))\,e_k\in H$. Then the left-hand side of \eqref{Var_g} is equal to $\int_0^t\sum_k \sigma_k^2\, |g_k(s,u(s))|^2_H\,ds$.

More generally, suppose $\phi=(\phi(t))_{t\geq 0}$ is an $(\mathcal{F}_t)_{t\in[0,\infty)}$-adapted, c\'adl\'ag, stochastic process with values in $L_2(S_0,H)$ and such that $\mathbb{E}\int_0^t \|\phi(s)\|^2\,ds <\infty$.
Then the stochastic integral 
$$\int_0^t \phi(s)\,dW(s)=\sum_{k=1}^{\infty}\sigma_k\int_0^t \phi(s)e_k\,dB_k(s)$$ is a well-defined $(\mathcal{F}_t)_{t\in[0,\infty)}$-martingale with mean zero and variance $$\mathbb{E}\left[\left|\int_0^t \phi(s)\,dW(s)\right|_{H}^2\right]
=\left(\sum_{k=1}^{\infty}\sigma_k^2\right)\mathbb{E}\int_0^t \|\phi(s)\|^2\,ds <\infty.$$
Recalling the trace $\text{Tr}(\Phi)=\sum_{k}(\Phi e_k, e_k)$ for any $\Phi\in L_2(S_0,H)$, we observe that 
$$\text{Tr} (g^*g)=\sum_k(g^* g e_k, e_k)=\sum_k |ge_k|_H^2=\|g\|^2.$$

\medskip

We make the following assumption on the time-inhomogeneous $g$ throughout this paper.
\begin{AS}\label{Assumption}
Let $\rho_{\infty}$ be the smallest constant such that

\begin{equation}\label{Cond1:g_infty}
\sup_{t\in [0,\infty)}\|g(t,v)\|^2_{L_2(S_0,H)} \leq \rho_{\infty}\, (1+|v|_H^2) \quad \text{for all }v\in H.
\end{equation}

We assume that $\rho_{\infty}<\infty$, i.e. we assume it exists and is finite.
\end{AS}

\begin{remark}
If $g(t,v)=g(t)$ does not depend on $v$, then \eqref{Cond1:g_infty} is equivalent to 

$$\sup_{t\in [0,\infty)}\|g(t)\|^2_{L_2(S_0,H)}\leq \rho_{\infty}.$$

\end{remark}

\section{The main result}\label{Section3}

We define the main quantity of interest in this paper, the \textit{bulk energy dissipation rate} (per unit mass)  for  (\ref{SSM}). This quantity will include dissipation due to the viscous forces and the turbulent diffusion.
For any function $\psi$ defined on $\mathbb{R}_+$ we denote by 
$$\left\langle\psi\right\rangle \coloneqq \limsup\limits_{T\rightarrow\infty}  \, \frac{1}{T} \int_{0}^{T} \psi(t)\, dt$$
the limsup of its time-average.  
\begin{definition}[Energy dissipation rate]\label{Def;Dissipation} 
The expected bulk energy dissipation rate (per unit mass)  for  (\ref{SSM}) is defined by
\begin{align}\label{varepsdef}
    \varepsilon:=  \varepsilon_0 + \varepsilon_M, 
\end{align}
where  
\begin{align*}
\varepsilon_0  \coloneqq 
\limsup\limits_{T\rightarrow\infty}\mathbb{E}\left[ \frac{1}{|D|}\,\frac{1}{T}\int_{0}^{T}   \nu\|\nabla u(t ,x, \omega)\|_{L^2(D)}^2  \, dt\right]\;=\frac{\nu}{|D|}\,\left\langle  \mathbb{E}[\|\nabla u\|_{L^2}^2]\right\rangle 
\end{align*}
and 
\begin{align*}
\varepsilon_M  \coloneqq 
\limsup\limits_{T\rightarrow\infty}\mathbb{E}\left[ \frac{1}{|D|}\,\frac{1}{T}\int_{0}^{T}   \bar{\nu}\|\nabla u(t ,x, \omega)\|_{L^r(D)}^r  \, dt\right]\; = \frac{\bar{\nu}}{|D|}\,\left\langle  \mathbb{E}[\|\nabla u\|_{L^r}^r ]\right\rangle.
\end{align*}

\end{definition}
\begin{remark}
Due to Fatou's lemma, the upper bound on the time-averaged dissipation rates $\varepsilon$, defined above,  does not necessarily yield a bound on the expected value of its time-limsup when the order of $\limsup$ and expectation are reversed.
\end{remark}

Before presenting the main result, it is essential to carefully define the various scales, taking into account both the domain and the underlying physics of the problem.

\begin{definition}\label{Scales} With the time-averaged notation $\left\langle \cdot \right \rangle$ introduced in Definition \ref{Def;Dissipation}, and 
with $|D| = \ell^3= $ the volume of the flow domain, the large-scale velocity \( U \) is defined as the square root of  the expectation of the $L^2(D)-$norm of velocity  

\[
U \coloneqq  \left \langle \frac{1}{|D|}  \mathbb{E}\left[ \|u\|^2  \right] \right \rangle ^{\frac{1}{2}}.
\]

The forcing length scale \( L \) is given by
$$L \coloneqq  \min \left\{\ell , \, \frac{F}{(\frac{1}{|D|} \|\grad f\|^2)^{\frac{1}{2}}}, \, \frac{F}{(\frac{1}{|D|} \|\grad f\|_{L^r}^r)^{\frac{1}{r}}}, \, \frac{F}{\|\grad f\|_{L^{\infty}(D)}}\right\},  \, $$
for non-zero forcing $f$, and by convention we take $L=\ell$ when  $f=0$.  Additionally, we introduce the quantities \( F \) and  \( G \)  to characterize the magnitudes of the deterministic and stochastic components of the force as

$$F \coloneqq  \left(\frac{1}{|D|  }\|f\|^2\right)^{\frac{1}{2}}=\frac{\|f\|}{|D|^{\frac{1}{2}}}, \hspace{1cm} \text{and}\hspace{1cm}
 G  \coloneqq   \left\langle \frac{1}{|D|  } \mathbb{E} \left[ \text{Tr} \left(g^* g (t, u)\right) \right]\right\rangle^{\frac{1}{2}}.$$

From here,  the Reynolds number is 
$$\Rey=\frac{U L}{\nu}$$
and the dimensionless  artificial  Reynolds number is given as
$$\rey=\frac{L^{r-1}}{\bar{\nu}\, U^{r-3}}. $$
We then define  the characteristic time $\tau$ as 
\[
  \tau= \frac{\rho_{\infty}\, L}{U}.
\]

\end{definition}

\begin{remark}
 It is straightforward to verify that the above expressions are dimensionally consistent. We interpret \( F \) as the amplitude of the deterministic forcing, and \( G^2 \) as the \textit{total energy rate supplied by the random force}, with physical units of \(\text{velocity}^2 \times \text{time}^{-1}\). Throughout, we assume \( F \, \text{and } G < \infty \).  From here and \eqref{Cond1:g_infty}, we verify that $\rho_{\infty}$ has physical dimension $\text{Time}^{-1}$; hence, $\tau$ is dimensionless. In addition, the applied deterministic  body force $f(x)$ is, without loss of generality, divergence free, and without loss of generality we restrict attention to mean-zero body forces and initial conditions so the velocity remains mean-zero for all $t > 0$.
\end{remark}

Recall that $\lambda_1$ is the smallest eigenvalue of  the Stokes operator (see \cite{FMRT01}).
We are now ready to give the rigorous statement of our main result.
  
\medskip
\begin{theorem}\label{MainThm1}

Let  $D = [0,\ell]^3$ denote the periodic box in $3d$, and $u(x, t; \omega)$ be a martingale  solution of the Stochastic Ladyzhynskaya  equations \eqref{SSM} on $[0,\infty)$ starting from the initial condition $u_0\in H$ that is $\mathcal{F}_0$-measurable. Suppose Assumption \ref{Assumption} holds with $\rho_{\infty} < {\nu\,  \lambda_1}$.   Then the averaged energy dissipation rate $\varepsilon$ in Definition \ref{Def;Dissipation} 
satisfies
\begin{equation} \label{final}
\varepsilon \leq \, C  \,    \left(1+ \tau +  \frac{1}{\Rey}  +  \frac{1}{\rey}\right)\frac{U^3}{L}, 
\end{equation}
where $U$ is the mean value of the root-mean-square (space and time averaged), $L$ is the longest length scale in the applied forcing function, and  $\tau:=\frac{\rho_{\infty}\, L}{U}$ is a dimensionless number. 

If, furthermore, $g(t,v)=g(t)$ does not depend on $v$, then we can omit the assumption  $\rho_{\infty} < {\nu\,  \lambda_1}$ and still obtain \eqref{final}.
\end{theorem}

\medskip

Before proving Theorem \ref{MainThm1}, we first  prove the boundedness of the kinetic energy under Assumption \ref{Assumption}. The argument for the model \eqref{SSM} closely follows the approach developed for the Navier--Stokes equations by Flandoli and Gatarek in \cite[Section 4]{FG95}, under the assumption that the magnitude of the noise can be controlled.

\begin{lemma}\label{KEBounded}
Suppose that Assumption \ref{Assumption} holds.
Then the kinetic energy of a  martingale solution to \eqref{SSM} is  uniformly bounded in time; i.e. 
 \begin{equation}\label{UniformTimeBound}
  \sup_{t\in [0,\infty) } \mathbb{E} [\|u(t)\|^2]< \infty,
 \end{equation}
provided that $\rho_{\infty}< \nu \,\lambda_1$. If,   furthermore, $g(t,v)=g(t)$ in \eqref{SSM} does not depend on $v$, then \eqref{UniformTimeBound} holds provided that $\rho_{\infty}< \infty$.
\end{lemma}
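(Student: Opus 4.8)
The plan is to apply the Itô formula to $\|u(t)\|^2$ and derive a Gronwall-type differential inequality for the function $t \mapsto \mathbb{E}[\|u(t)\|^2]$. First I would apply Itô's formula to the functional $u \mapsto \|u\|^2$ along the martingale solution of \eqref{SSM2}. The nonlinear transport term $u\cdot\nabla u$ contributes zero when tested against $u$ by the standard antisymmetry (and divergence-free) property, and the pressure term vanishes as well. The viscous term yields $-2\nu\|\nabla u\|^2$, the Ladyzhenskaya term yields $-2\bar\nu\|\nabla u\|_{L^r}^r \le 0$, the deterministic force yields $2(f,u)$, and the Itô correction from the stochastic integral gives the trace term $\sum_k |g_k(t,u)|_H^2 = \|g(t,u)\|^2_{L^2(S_0,H)}$, while the stochastic integral itself is a mean-zero martingale that drops out after taking expectations. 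So, after expectation, I obtain
\begin{equation*}
\frac{d}{dt}\mathbb{E}[\|u(t)\|^2] = -2\nu\,\mathbb{E}[\|\nabla u\|^2] - 2\bar\nu\,\mathbb{E}[\|\nabla u\|_{L^r}^r] + 2\,\mathbb{E}[(f,u)] + \mathbb{E}\big[\|g(t,u)\|^2_{L^2(S_0,H)}\big].
\end{equation*}

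Next I would estimate the right-hand side. Drop the nonpositive $-2\bar\nu\|\nabla u\|_{L^r}^r$ term. For the viscous term, use the Poincaré inequality $\lambda_1\|u\|^2 \le \|\nabla u\|^2$ to get $-2\nu\|\nabla u\|^2 \le -2\nu\lambda_1\|u\|^2$. For the deterministic force, bound $2(f,u) \le \tfrac{1}{\delta}\|f\|^2 + \delta\|u\|^2$ for a small $\delta>0$ to be absorbed, or more cleanly use Poincaré on $u$ together with Cauchy–Schwarz so that $2(f,u)$ is controlled by a fixed constant plus a term absorbable into the viscous dissipation. For the noise term, invoke Assumption \ref{Assumption}: $\mathbb{E}[\|g(t,u)\|^2_{L^2(S_0,H)}] \le \rho_\infty(1+\mathbb{E}[\|u\|^2])$. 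Collecting, with $y(t) := \mathbb{E}[\|u(t)\|^2]$, this produces
\begin{equation*}
y'(t) \le -(2\nu\lambda_1 - \rho_\infty - \delta)\,y(t) + C(\delta, f, \rho_\infty),
\end{equation*}
and the hypothesis $\rho_\infty < \nu\lambda_1$ guarantees we can choose $\delta$ small enough that the coefficient $2\nu\lambda_1 - \rho_\infty - \delta$ is strictly positive. A standard Gronwall/comparison argument then gives $y(t) \le \max\{y(0), C/(2\nu\lambda_1-\rho_\infty-\delta)\}$ uniformly in $t$, which is \eqref{UniformTimeBound}. For the additive-noise case $g(t,v)=g(t)$, the noise contributes only the bounded constant $\rho_\infty$ (no $\|u\|^2$ term), so the decay coefficient is simply $2\nu\lambda_1 - \delta > 0$ with no restriction on $\rho_\infty$, and the same comparison argument applies.

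The main obstacle is justifying the Itô formula at this level of regularity: a martingale solution lies only in $L^2(\Omega,L^\infty(0,T;H))\cap L^2([0,T]\times\Omega;V)\cap L^r([0,T]\times\Omega;V_r)$, which is not enough smoothness to apply the classical finite-dimensional Itô formula directly to $\|u\|^2$. The rigorous route is to work at the level of the Galerkin approximations (where $u^n$ is finite-dimensional and Itô applies classically), derive the energy identity there, take expectations, pass the differential inequality through, and only then pass to the limit $n\to\infty$ using the uniform bounds of \eqref{EnergyBounds} and lower semicontinuity of norms under weak convergence — this is exactly the scheme of \cite[Section 4]{FG95}, which the excerpt explicitly points to. A secondary technical point is ensuring the stochastic integral has zero expectation, which follows from the variance bound \eqref{Var_g} (already established in the excerpt) making it a genuine $L^2$-martingale; and one should be slightly careful that the inequality one gets is a one-sided (differential) inequality, so the comparison lemma must be stated for $y'(t)\le -c\,y(t)+C$ rather than an equality.
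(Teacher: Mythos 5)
Your proposal is correct and follows essentially the same route as the paper: It\^o's formula applied to $\|u\|^2$ to obtain the energy inequality \eqref{EnergyEq2}, the Poincar\'e inequality to convert viscous dissipation into a decay term and to absorb the forcing contribution, Assumption \ref{Assumption} to control the trace term by $\rho_\infty(1+\|u\|^2)$, and a Gr\"onwall/comparison argument under $\rho_\infty<\nu\lambda_1$ (with the restriction dropped in the additive case). The only cosmetic difference is that you absorb $2(f,u)$ via a free Young parameter $\delta$ while the paper absorbs it into half of the viscous dissipation, and your remarks on the Galerkin justification of It\^o's formula and the zero mean of the stochastic integral match the paper's reliance on \cite{FG95}.
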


\begin{remark}
An assumption like $\rho_{\infty}< \nu \,\lambda_1$ is natural to guarantee a uniform-in-time estimate in the general case; such an assumption also appeared in  \cite{FG95}, for instance.
\end{remark}

\begin{proof}[Proof of Lemma \ref{KEBounded}]
    After integration by parts of \eqref{SSM}  with respect to $t$  and applying the It\^o formula (c.f. \cite[Theorem 4.32]{DZ14}) to \eqref{SSM2}, we obtain  the following energy inequality: for any $t\in [0,T]$, it holds $\mathbb{P}$-a.s.  that 
 \begin{equation}\label{EnergyEq2}
  \begin{split}
  \|u(t)\|^2 + 2  \int_0^t \nu \,  \|\nabla u(s)\|^2 ds  & + 2   \int_0^t \bar{\nu}\,  \|\nabla u(s)\|_{L^r}^r ds\,   \leq  \|u_0\|^2   + \int_0^t \text{Tr} (g^* g (s, u(s))) ds  \\
  & +  2 \int_0^t  \langle  f, u(s)\rangle ds + 2 \int_0^t  \big\langle g(s, u(s)),\, u(s)\big\rangle  \, dW(s).
  \end{split}
  \end{equation}

 
Using the \ref{Young}, one can estimate the deterministic source term above as follows: for any $\epsilon\in(0,\infty)$, 
\begin{eqnarray}\left|\int_0^t  \langle  f, u(s)\rangle ds \right| 
\leq& \int_0^t \frac{1}{2\epsilon}\|f\|^2+\frac{\epsilon}{2}\|u(s)\|^2 \,ds.
\end{eqnarray}
Applying the \ref{Poincare} $\lambda_1 \|u(s)\|^2 \leq  \|\nabla u(s)\|^2$ to the second term and taking $\epsilon=\nu\lambda_1$, one obtains
$$\left|\int_0^t  \langle f, u(s) \rangle ds \right| \leq \frac{1}{2\,\nu\, \lambda_1} \int_0^t \|f\|^2 ds + \frac{\nu}{2} \int_0^t  \|\nabla u\|^2 ds.  $$
With the above estimate,
and applying  the \ref{Poincare} again, 
we obtain from \eqref{EnergyEq2} the following:
 \begin{equation}\label{EnergyEq2b}
  \begin{split}
  \|u(t)\|^2 + &  \int_0^t \nu\,\lambda_1 \,  \| u(s)\|^2 ds   + 2   \int_0^t \bar{\nu}\,  \|\nabla u(s)\|_{L^r}^r ds\,   \leq  \|u_0\|^2   + \int_0^t \text{Tr} (g^* g (s, u(s))) ds  \\
  & +  \frac{1}{\nu\, \lambda_1} \int_0^t \|f\|^2 ds  + 2 \int_0^t  \big\langle g(s, u(s)),\, u(s)\big\rangle  \, dW(s)
  \end{split}
  \end{equation}
$\mathbb{P}$-a.s., for $t\in[0,T]$.

Next,  with Assumption  \ref{Assumption},  we have that
$\text{Tr} (g^* g (s, u(s)))\leq \rho_{T}(1+\|u(s)\|^2)$.  Applying this,
we obtain from \eqref{EnergyEq2b} the following after taking expectation and rearranging terms involving $\E[\|u(s)\|^2]$: for all $t\in[0,T]$,
\begin{equation}\label{EnergyEq2c}
  \begin{split}
  \E[\|u(t)\|^2] +   \int_0^t (\nu \,  \lambda_1-\rho_{T}) \,\E[\|u(s)\|^2]\, ds   \,   \leq  \E[\|u_0\|^2]  &  + \rho_{T}\,t  
   +  \frac{1}{\nu \, \lambda_1} \int_0^t \|f\|^2  ds.
  \end{split}
  \end{equation}

Since $f = f(x)$ is time-independent and the map $t \mapsto \mathbb{E}[\|u(t)\|^2]$ is absolutely continuous, the above inequality can be written as
\begin{equation*}
\frac{d}{dt}\mathbb{E}[\|u(t)\|^2]
+ (\nu \lambda_1 - \rho_T)\,\mathbb{E}[\|u(t)\|^2]
\leq
\rho_T + \frac{1}{\nu \lambda_1}\|f\|^2 .
\end{equation*}
Provided that $ \sup_{T \in (0,\infty)} \rho_T < \nu \lambda_1$, Assumption~\ref{Assumption}, we obtain
\begin{align}
\frac{d}{dt}\mathbb{E}[\|u(t)\|^2]
+ \alpha \,\mathbb{E}[\|u(t)\|^2]
\leq
\nu \lambda_1 + \frac{1}{\nu \lambda_1}\|f\|^2,
\end{align}
for some $\alpha > 0$. Setting
\[
Y(t) = \mathbb{E}[\|u(t)\|^2],
\qquad
C = \nu \lambda_1 + \frac{1}{\nu \lambda_1}\|f\|^2,
\]
the inequality becomes
\[
Y'(t) + \alpha Y(t) \leq C.
\]
Applying the integrating factor $e^{\alpha t}$ to this differential inequality yields the uniform-in-time estimate \eqref{UniformTimeBound}:
\begin{equation}
\sup_{t \geq 0} \mathbb{E}\big[\|u(t)\|^2\big]
\leq
\mathbb{E}\big[\|u_0\|^2\big]
+ \frac{\nu \lambda_1}{\alpha}
+ \frac{1}{\alpha \nu \lambda_1}\|f\|^2
< \infty.
\end{equation}  

In the special case where $g(t,v)=g(t)$ is independent of $v$, the estimate \eqref{EnergyEq2c} can be strengthened. In this case, we obtain
\begin{equation}\label{EnergyEq2d}
\begin{split}
\E[\|u(t)\|^2]
+ \int_0^t \nu \lambda_1 \,\E[\|u(s)\|^2]\, ds
\leq
\E[\|u_0\|^2]
+ \rho_T \, t
+ \frac{1}{\nu \lambda_1} \int_0^t \|f\|^2 \, ds.
\end{split}
\end{equation}
This estimate likewise yields uniform-in-time bounds for $\E[\|u(t)\|^2]$.

\end{proof}
We are now ready to prove the main result.
\begin{proof}[Proof of Theorem \ref{MainThm1}]
    The proof starts from the energy type of integral inequality \eqref{EnergyEq2}, which  can be formally short-handed  as the following stochastic differential equation
 \begin{equation}\label{EnergyIneq1}
  \begin{split}
 d\|u(t)\|^2 + 2 \nu \|\nabla u (t) \|^2 dt +  2   \bar{\nu}\,  \|\nabla u(t)\|_{L^r}^r dt\ &\leq \text{Tr} (g^* g (t, u)) dt + 2 \langle f(x), u(t)\rangle dt \\
 & + 2 \,   \langle g(t, u), u \rangle \, dW(t).
  \end{split}
  \end{equation}
Using Hölder's inequality, one can show that $(g(t,u), u) \in L^2(0,T)$, which implies
\[
\mathbb{E} \left[ \int_0^T \langle g(t,u), u \rangle \, dW \right] = 0.
\]
Therefore, by averaging equation \eqref{EnergyEq2} over the interval $[0, T]$, applying the Cauchy–Schwarz inequality, and taking the expectation $\E$ with respect to $\mathbb{P}$, we arrive at
\begin{equation*}\label{Eq-3}
\begin{split}
&  \frac{1}{ 2\, |D|\, T } \, \mathbb{E} \left[\|u(T)\|^2 - \|u(0)\|^2 \right] + \mathbb{E} \big[\frac{1}{|D|} \frac{1}{T}  \int_0^T \nu \|\nabla u(t)\|^2 + \bar{\nu } \|\nabla u(t)\|_{L^r}^r dt \big]  \\  
& \leq \,  \frac{1}{2}  \frac{1}{|D|} \frac{1}{T} \int_0^T \mathbb{E} \left[ \text{Tr} \left(g^* g (t, u)\right) \right] dt + (\frac{1}{T} \frac{1}{|D|} \int_0^T \|f\|^2 dt)^{\frac{1}{2}} \, \E \big[(\frac{1}{T} \frac{1}{|D|} \int_0^T \|u(t)\|^2 dt)^{\frac{1}{2}}\big]\\
& \leq  \,  \frac{1}{2}  \frac{1}{|D|} \frac{1}{T} \int_0^T \mathbb{E} \left[ \text{Tr} \left(g^* g (t, u)\right) \right] dt + (\frac{1}{T} \frac{1}{|D|} \int_0^T \|f\|^2 dt)^{\frac{1}{2}} \,  \left( \frac{1}{T} \frac{1}{|D|} \E \left[ \int_0^T \|u(t)\|^2 dt \right]\right)^{\frac{1}{2}}.
  \end{split}
\end{equation*}
where the last step follows by Jensen's inequality.  After taking the $\limsup$ of the above inequality  and using the continuity and monotonicity of the square root function, one can apply the scaling defined in Definition \ref{Scales} to obtain
\begin{equation}\label{FirstIneq_combined}
\varepsilon \leq \frac{1}{2} \,  G^2  + F \, U.
\end{equation}

The next step is to estimate the magnitudes of the determintic force  $F$ in order to obtain a bound for \eqref{FirstIneq_combined}. We begin by taking the inner product of \eqref{SSM} with $f(x)$ and applying integration by parts, which yields

\begin{equation*}
\begin{split}
(du, f) + (u\cdot \nabla u, f)dt  & + \nu\, ( \nabla u,\nabla f) dt + \bar{\nu }\, (|\nabla u|^{r-2} \nabla u, \nabla f ) \, dt \\
& = \|f\|^2 dt  +   (g(t, u), f)\,  dW(t).
\end{split}
\end{equation*}
Integrating the above equation  in time over $[0,T]$, dividing by $|D|\,  T$, and taking the expectation with respect to $\mathbb{P}$, we obtain

\begin{equation}\label{Eq1}
\begin{split}
 &  \frac{\|f\|^2}{|D|}  +    
 \frac{1}{|D|}\, \frac{1}{T} \E \left[\int_0^T (g(t, u), f)\,  dW(t)\right] \\
 & =  \frac{1}{|D|}\, \frac{1}{T} \E \left[\, \int_0^T (du, f) + (u\cdot \nabla u, f)dt  + \nu\, ( \nabla u,\nabla f) dt + \bar{\nu }\, (|\nabla u|^{r-2} \nabla u, \nabla f ) \, dt  \, \right]\\
 & = \RN{1}+ \RN{2}+ \RN{3}+\RN{4}.
  \end{split}
\end{equation}

By our assumption on $u$ and $g$,
$\E \left[\int_0^T (g(t, u), f)\,  dW(t)\right]= 0$ in \eqref{Eq1}.
Before passing to the $\limsup$, we focus on estimating the four terms $\RN{1}, \RN{2}, \RN{3}, \RN{4}$ on the right-hand side of equation \eqref{Eq1}.

Using boundedness of the kinetic energy in Lemma \ref{KEBounded}, and having $f(x)$ being time-independent, term $\RN{1}$ will be estimated as 
\begin{align}\label{Term1}
  \E\left[\frac{1}{ T}\int_0^T (du,f) \right]&=\E\left[\frac{\left(u(T),f\right)-\left(u(0),f\right)}{ T}\right] \sim \mathcal{O}(\frac{1}{T}).
  \end{align}

On Term $\RN{2}$,  since $\nabla \cdot u = 0 $ we have $ (u\cdot \nabla u, f) = (\nabla \cdot (u\otimes u),   f)$ , and using integration  by parts we obtain,
\begin{align}\label{Term2}
    \left|\frac{1}{|D|}\frac{1}{T} \int_0^T \E \left[(u\cdot \nabla u, f)\right] dt \right| &\leq  \left| \frac{1}{|D|}\frac{1}{T} \int_0^T \E \left[ (u \otimes u, \nabla f) \right] dt  \right|\notag\\
    & \leq \|\nabla f\|_{L^{\infty}(0,T; L^{\infty}(D))} \frac{1}{|D|}\frac{1}{T} \int_0^T  \E \left[ \|u\|^2  \right] dt.  
\end{align}

To estimate the term $\RN{3}$  in (\ref{Eq1}), by using the Cauchy-Schwarz-Young inequality and Definition  \ref{Scales} we have, 
\begin{equation}\label{Term3}
\begin{split}
\left | \E \left[ \frac{1}{|D|}\frac{1}{T}\int_0^T \nu ( \nabla u,\nabla f) dt \right] \right| &\leq  \E \left[(\frac{1}{|D|}\frac{1}{T} \int_0^T \nu \|\nabla u\|^2 dt)^{\frac{1}{2}} \right] \, \left(\frac{1}{|D|}\frac{1}{T} \int_0^T \nu \|\nabla f\|^2 dt\right)^{\frac{1}{2}}\\
&\leq  \left( \E \left[\frac{1}{|D|}\frac{1}{T} \int_0^T \nu \|\nabla u\|^2 dt \right] \right)^{\frac{1}{2}} \, \nu^{\frac{1}{2}}\left(\frac{1}{|D|}\frac{1}{T} \int_0^T  \|\nabla f\|^2 dt\right)^{\frac{1}{2}},
\end{split}
\end{equation}
where the last step is justified by Jensen's inequality.

And finally using \ref{Holder} twice  with $p=\frac{r}{r-1}$ and $p'=r$ on term $\RN{4}$ in \eqref{Eq1},   along with Jensen's inequality, we get to 
\begin{equation}\label{Term4}
\begin{split}
&\left | \E \left[\frac{1}{|D|}\frac{1}{T}\int_0^T \bar{\nu}\,  (|\nabla u|^{r-2} \nabla u, \nabla f ) \, dt \right] \right| \leq \frac{1}{|D|}\frac{1}{T} \E \left[\int_0^T  \bar{\nu}\,   \|\nabla u\|_{L^r}^{r-1} \|\nabla f\|_{L^r}\, dt \right]\\
&\leq  \E \left[ \left( \frac{1}{|D|}\frac{1}{T}\int_0^T  \bar{\nu}\,   \|\nabla u\|_{L^r}^{r} \, dt\right)^{\frac{r-1}{r}} \right] \, \left(\bar{\nu}\,  \frac{1}{|D|}\frac{1}{T}\int_0^T   \|\nabla f\|_{L^r}^r\, dt  \right)^{\frac{1}{r}}\\
& \leq     \left(\mathbb{E}\left[   \frac{1}{|D|}\frac{1}{T}\int_0^T  \bar{\nu}\,   \|\nabla u\|_{L^r}^{r} \, dt\right] \right)^{\frac{r-1}{r}} \,  \bar{\nu}^{\frac{1}{r}}\,\left(  \frac{1}{|D|}\frac{1}{T}\int_0^T   \|\nabla f\|_{L^r}^r\, dt  \right)^{\frac{1}{r}}. 
\end{split}
\end{equation}

Now, considering the statistical quantities in Definition \ref{Def;Dissipation}, and  scales $L, U,$ and $F$ given in Definition~\ref{Scales}, we apply $\limsup$ to each of the terms on the right-hand side of Equation~\eqref{Eq1}. Using the estimates obtained above in \eqref{Term1}, \eqref{Term2}, \eqref{Term3}, and \eqref{Term4}, we arrive at
\begin{equation}\label{Eq4}
\begin{split}
    \limsup_{T \to \infty}\frac{1}{|\Omega|}\, \frac{1}{T}\,  \E \left[\, \int_0^T (du, f)  \, \right] & \leq \mathcal{O}(\frac{1}{T}) \to 0,\\
    \limsup_{T \to \infty}  \frac{1}{|\Omega|}\, \frac{1}{T} \, \E \left[\,  \int_0^T (u\cdot \nabla u, f)\, dt  \,  \right] & \leq \frac{F}{L}\, U^2,\\
      \limsup_{T \to \infty} \frac{1}{|\Omega|}\, \frac{1}{T} \, \E \left[\,  \nu\, \int_0^T ( \nabla u,\nabla f)\,  dt  \, \right] & \leq \sqrt{\nu}\,  \frac{F}{L} \, \varepsilon_0 ^{\frac{1}{2}},\\
       \limsup_{T \to \infty}  \frac{1}{|\Omega|}\, \frac{1}{T} \, \E \left[\,\bar{\nu }\, \int_0^T (|\nabla u|^{r-2} \nabla u, \nabla f ) \, dt  \, \right] & \leq  \bar{\nu}^{\frac{1}{r}}\,  \frac{F}{L}\,  \varepsilon_M ^{\frac{r-1}{r}}.
\end{split}
\end{equation}

It is important to note that the ability to pass the $\limsup$ through the root functions in the argument above hinges on the fact that the root functions are both continuous and monotone increasing.

After taking $\limsup$ from \eqref{Eq1}, and inserting the estimates in \eqref{Eq4}, we have

\begin{equation*}
 F \leq \frac{U^2}{L} + \frac{\sqrt{\nu}}{L}  \, \varepsilon_0 ^{\frac{1}{2}} +    \frac{\bar{\nu}^{\frac{1}{r}}}{L}\,  \varepsilon_M^{\frac{r-1}{r}}.
\end{equation*}
After inserting multipliers of $U^{\frac{1}{2}}$ and $U^{-\frac{1}{2}}$ in the second term above and also $U^{\frac{r-1}{r}}$ and  $U^{\frac{1-r}{r}}$ in the third  term, we apply the Young's inequality to arrive at 
\begin{equation}\label{SecondIneq1}
F \leq \frac{U^2}{L} +  \frac{1}{2} \frac{U \, \nu}{L^2}+\frac{1}{2} \frac{\varepsilon_0  }{U} + \frac{1}{r} \frac{\nu}{L^r}\, U^{r-1} + \frac{r-1}{r} \frac{\varepsilon_M }{U}\, .
\end{equation}
Using the  above estimate  (\ref{SecondIneq1}) for $F$ in (\ref{FirstIneq_combined}) gives
\begin{equation*}
 \varepsilon_0 +  \varepsilon_M  =  \varepsilon  \leq \frac{1}{2}  G^2  + \frac{U^3}{L} +  \frac{1}{2} \frac{U^2 \, \nu}{L^2}+\frac{1}{2} \varepsilon_0   + \frac{1}{r} \frac{\nu}{L^r}\, U^{r} + \frac{r-1}{r}\varepsilon_M. 
\end{equation*}
We have thus estimated the  upper bound as, 
\begin{equation} \label{Semi-final}
\varepsilon \lesssim \,    G^2 + \left(1+ \frac{\nu}{U\, L} +  \frac{\bar{\nu}\, U^{r-3}}{L^{r-1}} \right)\frac{U^3}{L}.
\end{equation}

Now we focus on estimating $G^2$ in the above. 
Using  Assumption \ref{Assumption}, the stochastic forcing term $G^2$ in the above can be estimated as
\begin{equation}\label{G^2}
    \begin{split}
        G^2  & \coloneqq   \left\langle \frac{1}{|D|  } \mathbb{E} \left[ \text{Tr} \left(g^* g (t, u(t))\right) \right]\right\rangle   = \left\langle \frac{1}{|D|  } \mathbb{E} \left[ \| g (t,u(t))\|^2\right] \right\rangle\\
        &\leq  \rho_{\infty}\, \left\langle \frac{1}{|D|  } \mathbb{E} \left[(1+|u(t)|_H^2)\right]\right\rangle \leq C\,\rho_{\infty}\, U^2,
    \end{split}
\end{equation}
where $C$ is a dimensionless constant. Using the above estimate for \( G^2 \) in \eqref{Semi-final}, and the scales defined in Definition \ref{Scales},  we arrive at \eqref{final} as claimed.


\end{proof}

\section{Summary $\&$  Discussion}

We study a stochastic Ladyzhenskaya--Smagorinsky equation on a three-dimensional periodic domain, where the flow is driven by a combination of deterministic and stochastic forcing of the form \( f\,dt + g(t,u)\,dW \).

A central question in the theory of turbulence is whether the energy dissipation rate remains strictly positive in the high Reynolds number (vanishing viscosity) limit; the phenomenon is commonly referred to as anomalous dissipation. The estimate established in Theorem~\ref{MainThm1} shows that the first moment of the energy dissipation rate satisfies $ \varepsilon \lesssim  C  \frac{U^3}{L}$, 
which is dimensionally consistent and scales with the classical energy input rate $U^3/L$.  In particular, the bound remains finite as $\nu \to 0$, demonstrating that the model is consistent with the dissipative anomaly predicted by Kolmogorov’s theory. Moreover, the contribution of the stochastic forcing enters only through the dimensionless parameter $\rho_{\infty} L/U$, preserving the expected scaling structure. In the deterministic limit ($g\sim 0$), the estimate reduces to the known dissipation rates for the Navier--Stokes equations and the Smagorinsky model, confirming consistency with established results.  Thus, while our analysis does not establish a positive lower bound in the inviscid limit, it rigorously shows that the stochastic turbulence model exhibits the correct scaling behavior associated with anomalous dissipation and does not over-dissipate in the absence of boundary layer effects.


Throughout this paper, the deterministic force $f = f(x)$ is assumed to be time-independent. 
In the case of a time-dependent deterministic force, additional care would be required, both in the stochastic analysis and in the proper quantification of the forcing amplitude, which would naturally involve a suitable time-averaged formulation.  We do not pursue this direction here; however, extending the present results to a broader class of time-dependent deterministic forces would be an interesting topic for future research (see, e.g., \cite{kumar2025}). 

Our result extends previous estimates for both the deterministic Navier–Stokes  equations \cite{DF02} and Smagorinsky models \cite{L16} and demonstrates that the stochastic system \eqref{SSM}, in the absence of the boundary layer,   preserves physically relevant dissipation scaling without artificial over-dissipation.  The extension of this work to the channel flow  remains an open problem, which would provide valuable insights into near-wall behavior.  

Having a uniform-in-time bound on the kinetic energy $ \E \left[\|u\|^2\right]$ is a key ingredient in the proof. For general multiplicative noise of the form \( g = g(t,u) \) in \eqref{SSM}, such a bound  established in Lemma \ref{KEBounded} under a smallness assumption on the diffusion term (as also considered in \cite{FG95}). However, the results in this manuscript can be extended to the case of additive noise, i.e., \( g = g(t) \), without requiring any additional assumptions.

Our main analysis began with the energy inequality \eqref{EnergyEq2}, then  in  \eqref{FirstIneq_combined},  we first bound the relevant statistical quantities in terms of the amplitude of the deterministic force \( F \) and the total energy rate supplied by the random force \( G^2 \). We then quantify \( F \) in terms of the characteristic velocity \( U \)  and length scales \( L \),  and the dissipation quantities by taking the inner product of \( f \) with the equation of motion, see \eqref{SecondIneq1}. One may be tempted to follow the same strategy to  estimate \( G^2 \) by taking the inner product of \( g(t,u) \) with the equation \eqref{SSM}. However, after taking the expectation, the quantity involving  $G^2$ vanishes and 
it is not clear how to estimate $G^2$.
To overcome this, we rely on the standard  Assumption \ref{Assumption} to quantify \( G^2 \). A more refined estimate of this quantity in terms of the characteristic scales \( U \) and \( L \) would further strengthen our results and provide deeper insight into the balance between the stochastic  force and dissipation.

\section*{Acknowledgments}

This work of L. W.-T. Fan    was  supported by the
NSF grants DMS-2534011 and DMS-2532574.  This work of A. Pakzad was  supported by the
NSF grant DMS-2532987.

\end{document}